\theoremstyle{plain}
\newtheorem{claim}{\sc Claim}[section]
\newtheorem{corollary}[claim]{\sc Corollary}
\newtheorem{theorem}[claim]{\sc Theorem}
\theoremstyle{definition}
\theoremstyle{remark}
\newcommand{\integer}{\mathbf{Z}}
\begin{document}

\title[better than nice geometry]{A geometry where everything is
  better than nice} 
\author[L. Bates and P. Gibson]{Larry Bates and
  Peter Gibson}

\date{\today }

\begin{abstract}
We present a geometry in the disk whose metric truth is curiously arithmetic.
\end{abstract}

\maketitle

\section{The better-than-nice metric}
Consider the metric 
\[
  g=\frac{4}{1-r^2}(dx^2 +dy^2)
\]
in the unit disk.  Here $r^2=x^2+y^2$ as usual.  Everything of interest can be computed explicitly, and with surprising results.

\subsection{Hypocycloids in the disk}

Consider the curve
\[
  c(t) = (1-a)e^{i\theta(t)} +ae^{-i\phi(t)}, \qquad 0<a<1,
\]
thought of as a point on a circle of radius $a$ turning at a rate
$\dot{\phi}$ in the clockwise direction as the centre of the circle
rotates on a circle of radius $1-a$ rotating at a rate $\dot{\theta}$
in the counterclockwise direction.

For the small circle of radius $a$ to roll without slipping on the inside 
of the circle of radius $1$ requires the point $c(t)$ to have velocity 
$0$ when $|c(t)|=1$, which is the relation
\[
  a\dot{\phi}=(1-a)\dot{\theta}.
\]
Because $g$ is rotationally invariant, without loss of generality we
may take 
\[
\theta(t) = \frac{at}{2\sqrt{a(1-a)}},\qquad\phi(t)=\frac{(1-a)t}{2\sqrt{a(1-a)}}.
\]

\begin{theorem}
The curve $c(t)$ is a geodesic for the metric $g$, parameterized by arclength.
\end{theorem}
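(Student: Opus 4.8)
The plan is to exploit the rotational symmetry of $g$ and reduce the statement to two scalar facts: that $c$ has constant unit $g$-speed, and that the ``angular momentum'' associated with the rotational Killing field is conserved along $c$. In polar coordinates $(r,\psi)$, where $x+iy = re^{i\psi}$, the metric becomes $g = \frac{4}{1-r^2}(dr^2 + r^2\,d\psi^2)$, and geodesics are the extremals of $L = \frac12 g(\dot c,\dot c) = \frac{2}{1-r^2}(\dot r^2 + r^2\dot\psi^2)$. Since $\psi$ is cyclic, the Euler--Lagrange system consists of the angular equation $\frac{d}{dt}\!\left(\frac{4r^2\dot\psi}{1-r^2}\right)=0$ and a radial equation. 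I would first verify the unit-speed (arclength) claim, then the conservation of $\ell := \frac{4r^2\dot\psi}{1-r^2}$, and finally deduce the radial equation for free from the energy identity.

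For the arclength claim, differentiating gives $\dot c = i\big[(1-a)\dot\theta\,e^{i\theta} - a\dot\phi\,e^{-i\phi}\big]$. The rolling relation $a\dot\phi = (1-a)\dot\theta$ together with the stated choice of $\theta,\phi$ makes both coefficients equal to $\beta := \tfrac12\sqrt{a(1-a)}$, so $\dot c = i\beta\,(e^{i\theta}-e^{-i\phi})$. A short computation then yields the two crucial expressions
\[
|\dot c|^2 = 4\beta^2\sin^2\!\Big(\tfrac{\theta+\phi}{2}\Big),\qquad 1-r^2 = 4a(1-a)\sin^2\!\Big(\tfrac{\theta+\phi}{2}\Big),
\]
where $r^2=|c|^2$ is expanded using $|c|^2=(1-a)^2+a^2+2a(1-a)\cos(\theta+\phi)$. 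The common factor $\sin^2(\frac{\theta+\phi}{2})$ cancels and, since $4\beta^2 = a(1-a)$, we get $g(\dot c,\dot c) = \frac{4}{1-r^2}|\dot c|^2 = 1$. This both proves the arclength claim and shows $L\equiv\frac12$ along $c$.

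For the angular momentum, I would use $r^2\dot\psi = \operatorname{Im}(\bar c\,\dot c)$. With $\dot c = i\beta(e^{i\theta}-e^{-i\phi})$ and $\bar c = (1-a)e^{-i\theta}+a e^{i\phi}$, the imaginary part collapses to $\operatorname{Im}(\bar c\,\dot c) = \beta(1-2a)\,(1-\cos(\theta+\phi))$. Dividing by $1-r^2 = 2a(1-a)(1-\cos(\theta+\phi))$ and multiplying by $4$, the $(1-\cos(\theta+\phi))$ factor cancels again, leaving
\[
\ell = \frac{4\,\operatorname{Im}(\bar c\,\dot c)}{1-r^2} = \frac{1-2a}{\sqrt{a(1-a)}},
\]
a constant. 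Thus the angular Euler--Lagrange equation holds identically along $c$.

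It remains to obtain the radial equation. Rather than computing it directly, I would invoke the purely calculus identity $\frac{dE}{dt} = \dot r\,(\mathrm{EL})_r + \dot\psi\,(\mathrm{EL})_\psi$, valid along any curve, where $E = \dot q^i\partial_{\dot q^i}L - L = L$ for the homogeneous quadratic $L$. Since $E = \frac12 g(\dot c,\dot c) = \frac12$ is constant and $(\mathrm{EL})_\psi = \dot\ell = 0$, we get $\dot r\,(\mathrm{EL})_r \equiv 0$; as $\dot r$ vanishes only at the isolated values $\theta+\phi\in\pi\integer$ (the cusps on $r=1$ and the inner turning points $r=|1-2a|$), continuity forces $(\mathrm{EL})_r\equiv 0$ everywhere. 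Hence both Euler--Lagrange equations hold and $c$ is a geodesic. The only genuine subtlety is this last continuity step together with the degenerate case $a=\tfrac12$, where $\ell=0$, $r$ reaches $0$, and the curve is a Euclidean diameter; there polar coordinates fail at the origin, but the diameter is fixed by a reflection isometry and is therefore a geodesic by symmetry. I expect the main obstacle to be bookkeeping the two ``miraculous'' cancellations of $\sin^2(\frac{\theta+\phi}{2})$, which are exactly what the prescribed time parameterization is engineered to produce; everything downstream is then a symmetry argument rather than a computation.
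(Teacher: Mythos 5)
Your argument is correct, but it takes a genuinely different route from the paper. The paper writes the geodesic equation in complex form, $(1-z\bar z)\ddot z+\bar z\dot z^2=0$, and verifies directly that $c(t)$ satisfies it, together with the single identity $|\dot c|=\sqrt{1-|c|^2}/2$ for the arclength claim. You instead work with the polar Lagrangian and replace the second-order ODE check by two first integrals --- unit energy and the Clairaut/angular-momentum constant $\ell=(1-2a)/\sqrt{a(1-a)}$ --- recovering the radial equation from the identity $\dot E=\dot q^i(\mathrm{EL})_i$ plus a continuity argument at the isolated zeros of $\dot r$. Both of your cancellations of $\sin^2\bigl(\tfrac{\theta+\phi}{2}\bigr)$ check out, and the energy identity and the density of $\{\dot r\neq0\}$ are used correctly (the inner turning points $r=|1-2a|$ are genuinely interior, so continuity applies there; at the boundary cusps $r=1$ the equation degenerates for the paper's proof just as for yours, so you are not worse off). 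What your approach buys is conceptual transparency and first-order-only computations, at the cost of two extra pieces of bookkeeping the paper avoids: the continuity step to upgrade $\dot r\,(\mathrm{EL})_r\equiv0$ to $(\mathrm{EL})_r\equiv0$, and the separate symmetry argument for $a=\tfrac12$, where polar coordinates break down at the origin but the complex equation (2) of the paper applies uniformly. The paper's verification is shorter and uniform in $a$, but hides why the parameterization works; yours makes the role of the rotational symmetry and of the prescribed time scaling explicit.
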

\begin{proof}
For our $g$, the equations $\ddot{u}^i+\Gamma^i_{jk}\dot{u}^j\dot{u}^k=0$ determining geodesics $u(t)=\bigl(x(t),y(t)\bigr)$ parameterized proportional to  arclength are 
\begin{equation}
\begin{split}\nonumber
(1-x^2-y^2)\ddot{x}+x(\dot{x}^2-\dot{y}^2)+2y\dot{x}\dot{y}&=0\quad \mbox{and}\\(1-x^2-y^2)\ddot{y}-y(\dot{x}^2-\dot{y}^2)+2x\dot{x}\dot{y}&=0,
\end{split}
\end{equation}
which can be expressed in terms of $z=x+iy$, as the single equation
\begin{equation}\label{z}
(1-z\bar{z})\ddot{z}+\bar{z}\dot{z}^2=0.
\end{equation}
Given the formula for $c(t)$ it is straightforward to verify that $c(t)$ satisfies (\ref{z}) and moreover that $|\dot{c}|=\sqrt{1-|c|^2}/2$, from which it follows that $\lVert\dot{c}\rVert_g=1$, meaning that the parameterization is by arclength. 
\end{proof}
\begin{theorem}
The closed geodesics (i.e. keep rolling the generating circle of the hypocycloid until it closes up) have  length $4\pi\sqrt{n}$, and the number of geometrically distinct geodesics of length $4\pi\sqrt{n}$ is given by the arithmetic function $\psi(n)$. 
\end{theorem}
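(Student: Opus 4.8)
The plan is to treat $c(t)$ as a two–frequency epicyclic motion and push everything onto the arithmetic of its winding numbers. By the previous theorem each $c$ is a unit–speed geodesic, so its length equals its minimal period $T$. Writing $\omega_1=\dot\theta$ and $\omega_2=\dot\phi$, the motion closes up exactly when $\omega_1T,\omega_2T\in2\pi\mathbb{Z}$, i.e. when $\omega_1/\omega_2=a/(1-a)\in\mathbb{Q}$. Writing $a=p/q$ in lowest terms, the cusps are the points with $|c|=1$, which (since $|c|^2=(1-a)^2+a^2+2a(1-a)\cos(\theta+\phi)$) occur precisely when $\theta+\phi\in2\pi\mathbb{Z}$; over one period $\theta+\phi$ increases by $2\pi q$, so there are $q$ cusps, while $\theta$ and $\phi$ wind $p$ and $q-p$ times (these being coprime, the minimal closure is $\omega_1T=2\pi p$, $\omega_2T=2\pi(q-p)$). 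The single clean identity $\omega_1\omega_2=\tfrac14$ then gives $\tfrac14 T^2=\omega_1\omega_2T^2=4\pi^2p(q-p)$, so $T=4\pi\sqrt{p(q-p)}$. Setting $n=p(q-p)$ explains the value $4\pi\sqrt n$; and since $1\cdot n$ is a coprime product, every $n\ge1$ occurs.

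For the enumeration my plan is to package each closed geodesic by a lattice on the $(\theta,\phi)$–torus $\mathbb{R}^2/(2\pi\mathbb{Z})^2$: the orbit lifts to a line, and its closure vector together with the ambient period lattice cuts out a sublattice of $(2\pi\mathbb{Z})^2$ of index $n=p(q-p)$, the requirement that the curve be a single simple closure (traced once) corresponding to the quotient being cyclic. Under this correspondence the geometrically distinct geodesics of length $4\pi\sqrt n$ should be counted by the index–$n$ sublattices of $\mathbb{Z}^2$ with cyclic quotient, a quantity classically equal to $\psi(n)=\#\mathbb{P}^1(\mathbb{Z}/n\mathbb{Z})=[\mathrm{SL}_2(\mathbb{Z}):\Gamma_0(n)]$. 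I would then finish the count purely number–theoretically from the divisor–sum form $\psi(n)=\sum_{d\mid n}\mu^2(d)\,n/d=n\prod_{\ell\mid n}(1+\ell^{-1})$, reducing to multiplicativity and the prime–power value $\psi(\ell^k)=\ell^{k-1}(\ell+1)$.

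The main obstacle is exactly the bookkeeping hidden in ``geometrically distinct.'' The naive tally—reduced fractions $p/q$ with $p(q-p)=n$, equivalently coprime factorizations $n=p\cdot(q-p)$—yields only $2^{\omega(n)}$, far short of $\psi(n)$, so one must correctly fold in the double–generation identification $a\leftrightarrow1-a$, the order–$q$ rotational symmetry of each $\{q/p\}$ curve, and, decisively, the non–primitive (multiply–wound) closures. The last of these is where the subtlety concentrates: the contribution that distinguishes $\psi(n)$ from the unitary–divisor sum $\sigma^\ast(n)=\prod_{\ell^k\Vert n}(1+\ell^k)$ is supported on prime–power $n$, so the proof must account for precisely the ``extra'' geodesics present when $n$ is not squarefree. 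I expect the hard step to be showing that these geometric identifications are exactly those induced by the $\mathrm{GL}_2(\mathbb{Z})$–action on sublattices, so that the geodesic count transports onto the cyclic–sublattice count; proving that this bijection is well defined and exhaustive—rather than merely matching small cases—is the real work.
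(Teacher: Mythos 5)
Your derivation of the length is correct and is essentially the paper's computation in a slightly different parametrization: with $a=p/q$ in lowest terms the primitive closure time is $T=4\pi\sqrt{p(q-p)}$ with $p$ and $q-p$ coprime, and the identity $\dot\theta\dot\phi=\tfrac14$ is a clean way to package it. The problem lies entirely in the second half, and it stems from a misidentification of the function $\psi$. In this paper $\psi(n)$ is \emph{not} the Dedekind psi function $n\prod_{\ell\mid n}(1+\ell^{-1})=[\mathrm{SL}_2(\mathbf{Z}):\Gamma_0(n)]$; it is OEIS A007875, the number of ways of writing $n=pq$ with $p\leq q$ and $(p,q)=1$, which equals $2^{\omega(n)-1}$ for $n>1$. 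Consequently the tally you dismiss as ``naive'' and ``far short'' is in fact the answer: the ordered coprime factorizations $n=p\cdot(q-p)$ number $2^{\omega(n)}$, and the double-generation identification $a\leftrightarrow 1-a$ (which swaps the two factors and replaces $c(t)$ by $\overline{c(t)}$, the same point set, since a closed hypocycloid is symmetric about the axis through a cusp) cuts this exactly in half. The paper's proof is precisely this elementary enumeration: each coprime pair $p\leq q$ with $pq=n$ arises from $a=p/(p+q)\in(0,1/2]$ and gives one geometrically distinct once-covered closed geodesic of length $4\pi\sqrt{pq}$.

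The machinery you propose in order to close the perceived gap would therefore prove a false statement. Index-$n$ sublattices of $\mathbf{Z}^2$ with cyclic quotient number $n\prod_{\ell\mid n}(1+\ell^{-1})$, which is $3$ for $n=2$ and $12$ for $n=6$, whereas the geodesic counts are $1$ and $2$ respectively. The ``extra'' geodesics you hope to find for non-squarefree $n$ --- the multiply-wound closures --- are not new geometric objects: a $d$-fold traversal of a primitive geodesic of length $4\pi\sqrt{m}$ has the same trace as the primitive one (and would in any case contribute at $n=d^2m$, not at $n=m$), and the theorem counts geodesics only up to geometric distinctness. There is no $\mathrm{GL}_2(\mathbf{Z})$ or cyclic-sublattice bookkeeping to be done; once you have the length formula, the count is the number of unordered coprime factorizations of $n$, and that is what the paper's $\psi$ denotes.
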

The function $\psi(n)$ counts the number of different ways that the integer $n$ may be written as a product $n=pq$, with $p\leq q$, $(p,q)=1$. Values of this function are tabulated in sequence $A007875$ in the online encyclopedia of integer sequences \cite{OEIS}.
\begin{proof}
Beginning at $c(0)=1$, the geodesic $c(t)$ first returns to the boundary circle at 
\[
c\,\Bigl(4\pi\sqrt{a(1-a)}\Bigr)=e^{2\pi i(1-a)},
\]
returning again at points of the form $e^{2\pi im(1-a)}$ $(m\in\integer_+)$.  The corresponding succession of cycloidal geodesic arcs winds clockwise around the origin if $0<a<1/2$ and counterclockwise if $1/2<a<1$; when $a=1/2$, $c(t)$ traverses back and forth along the $x$-axis.  Thus $c(t)$ forms a once-covered  closed geodesic precisely when $2\pi m(1-a)=q2\pi$ for some relatively prime pair of positive integers $q<m$, in which case the geodesic has length 
\[
4\pi m\sqrt{a(1-a)}=4\pi\sqrt{(m-q)q}.
\]
To count geometrically distinct closed geodesics we restrict to $0<a\leq1/2$, in which case $p:=m-q=ma\leq m(1-a)=q$.  Given any relatively prime positive integers $p\leq q$ the geodesic of length $4\pi\sqrt{pq}$ occurs when $a=p/(p+q)$.  
\end{proof}

\subsection{Eigenfunctions and eigenvalues of the Laplacian}

Set the Laplacian $\Delta$ to be
\[
  \Delta = -g^{ab}\nabla_a\nabla_b
\]
where $\nabla_a$ is the covariant derivative operator associated to the metric 
$g$ via the Levi-Civita connection. Consider the eigenvalue problem
\[
  \Delta u = \lambda u
\]
for functions $u$ with the boundary value $u(r=1)=0$.  This problem has a 
number of remarkable features.

\begin{theorem} The eigenfunctions and eigenvalues satisfy
  \begin{enumerate}
    \item The eigenvalues $\lambda_n$ are precisely the positive integers 
$n=1,2,3,\dots$.
    \item The eigenfunctions are polynomials.
    \item The dimension of the eigenspace for eigenvalue $n$ is the number of 
divisors of $n$. (The number of divisors function is denoted by $\tau(n)$.)
  \end{enumerate}
  \end{theorem}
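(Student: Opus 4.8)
The plan is to convert the whole problem into a single ordinary differential equation in one variable and then exploit the fact that the resulting operator preserves finite-dimensional spaces of polynomials, so that the spectrum can be read off by linear algebra. First I would compute the operator. Because $g$ is conformal to the flat metric, $g_{ab}=\tfrac{4}{1-r^2}\delta_{ab}$, the Laplace--Beltrami operator is a scalar multiple of the flat one, and evaluating $-g^{ab}\nabla_a\nabla_b$ gives
\[
\Delta u=-\frac{1-r^2}{4}\,\Delta_0 u,\qquad \Delta_0=\partial_x^2+\partial_y^2 ,
\]
so that $\Delta u=\lambda u$ becomes $(1-r^2)\Delta_0 u+4\lambda u=0$. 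The structural fact I would record immediately is that $\Delta_0$ lowers polynomial degree by two while multiplication by $1-r^2=1-z\bar z$ raises it by two; hence $\Delta$ \emph{maps the polynomials of degree $\le d$ into themselves}, which is what makes the spectrum computable.

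Next I would use the rotational invariance of $g$ to work in one angular sector, writing $u=z^{k}h(s)$ with $s=r^2$ and $k\ge 0$ (the conjugate $\bar z^{|k|}h(s)$ handles $k<0$). Substituting and clearing $z^{k}$ reduces the eigenvalue equation to
\[
\mathcal L_k h:=-(1-s)\bigl(s\,h''+(k+1)h'\bigr)=\lambda h .
\]
On monomials $\mathcal L_k(s^{j})=j(j+k)\,s^{j}-j(j+k)\,s^{j-1}$, so in the basis $1,s,s^2,\dots$ the operator $\mathcal L_k$ is upper bidiagonal with diagonal entries $j(j+k)$. Its polynomial eigenvalues are therefore exactly $j(j+k)$, $j=0,1,2,\dots$, each with a one-dimensional eigenspace spanned by an honest polynomial of degree $j$ in $s$ — namely the terminating hypergeometric polynomial ${}_2F_1(-j,\,j+k;\,k+1;\,s)$. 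This already gives assertion (2).

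For the boundary condition $h(1)=0$ there is a pleasant shortcut: evaluating $\mathcal L_k h=\lambda h$ at $s=1$ annihilates the left-hand side, forcing $\lambda\,h(1)=0$. Thus every eigenfunction with $\lambda\ne0$ automatically meets the boundary condition, while the only $\lambda=0$ mode, $h\equiv\mathrm{const}$ (i.e. $u=z^{k}$), fails it and is discarded. The admissible eigenvalues are then $\lambda=N(N+|k|)$ with $N\ge1$, $k\in\integer$. Writing $n=N(N+|k|)$ exhibits $n$ as a product of two positive integers with $N\le N+|k|$; since every positive integer factors (at least as $1\cdot n$), this set is precisely $\{1,2,3,\dots\}$, which is (1). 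For (3) I would set up the bijection from eigenfunctions indexed by $(k,N)$ to divisors of $n$: send a $k\ge0$ eigenfunction to its smaller factor $N$ (a divisor $\le\sqrt n$) and a $k<0$ eigenfunction to its larger factor $n/N$ (a divisor $>\sqrt n$). This is a bijection onto the complete set of divisors, so the eigenspace for $n$ has dimension $\tau(n)$.

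The main obstacle is not producing these eigenfunctions but proving there are no \emph{others}: non-polynomial solutions within a sector, and any continuous spectrum. Ruling out the former is the real work. In each sector the solution of $\mathcal L_k h=\lambda h$ regular at the origin is, up to scale, the unique series ${}_2F_1(a,b;k+1;s)$ with $a+b=|k|$ and $ab=-\lambda$ (the second solution is singular at $s=0$ and excluded). Since $c-a-b=1>0$, Gauss's evaluation of ${}_2F_1(a,b;c;1)$ applies, and $h(1)=0$ forces $a$ or $b$ to be a negative integer, i.e. exactly $\lambda=N(N+|k|)$ with the series terminating; this shows the polynomials above exhaust the point spectrum sector by sector. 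The genuine delicacy is functional-analytic: this metric has finite diameter, $\int_0^1 2(1-r^2)^{-1/2}\,dr=\pi$, yet infinite volume, so I would need to fix the self-adjoint realization (the Friedrichs extension) and invoke the singular Sturm--Liouville theory for $\mathcal L_k$ on $(0,1)$ to guarantee a purely discrete spectrum whose listed eigenfunctions are complete, thereby closing the argument.
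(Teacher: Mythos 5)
Your proposal is correct and, on the decisive point --- identifying the eigenvalues --- it follows the same route as the paper: separate variables into Fourier modes, reduce to a hypergeometric equation in $s=r^2$ with $c-a-b=1$, and use Gauss's evaluation of ${}_2F_1(a,b;c;1)$ to show that the boundary condition forces the series to terminate, giving $\lambda=N(N+|k|)$; the divisor count is then the same bijection. Where you genuinely diverge is in how parts (2) and (3) are established. The paper proves polynomiality and linear independence by exhibiting a Rodrigues-type formula $u^{(p,q)}=\frac{(-1)^p}{q(p+q-1)!}(1-z\bar z)\partial_{\bar z}^p\partial_z^q(1-z\bar z)^{p+q-1}$ and checking $\Delta u^{(p,q)}=pq\,u^{(p,q)}$ directly, whereas you observe that $\mathcal L_k$ acts on the monomial basis $1,s,s^2,\dots$ as a triangular matrix with distinct diagonal entries $j(j+k)$; that is a cleaner and more self-contained way to produce the polynomial eigenfunctions, and your evaluation-at-$s=1$ trick showing the boundary condition is automatic for $\lambda\neq0$ is a nice observation absent from the paper (valid for the polynomial solutions, where $h''$ is bounded at $s=1$; note it does not apply directly to the non-terminating solutions, whose second derivative blows up there, so the Gauss step is still needed). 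Two smaller remarks: the paper justifies the reduction to single Fourier modes for \emph{distributional} eigenfunctions via analytic hypoellipticity, a step you pass over with an appeal to rotational invariance and which deserves a sentence; conversely, your closing worry about fixing a self-adjoint realization and proving completeness of the eigenfunctions is not actually required for the theorem as stated, which only asks to characterize all solutions of the boundary-value problem $\Delta u=\lambda u$, $u|_{r=1}=0$ --- your sector-by-sector ODE analysis already does that.
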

\begin{proof}
Since the operator
\[
-\Delta+\lambda=\frac{1-r^2}{4}\left(\frac{\partial^2}{\partial x^2}+\frac{\partial^2}{\partial y^2}\right)+\lambda
\]
is analytic hypoelliptic, distributional solutions to the eigenvalue equation $\Delta u=\lambda u$ are necessarily real analytic, and representable near $(x,y)=(0,0)$ by absolutely convergent Fourier series
\[
u(r,\theta)=\sum_{n\in\integer}a_n(r)e^{in\theta}.
\]
Observing that $(-\Delta+\lambda)\bigl(a_n(r)e^{in\theta}\bigr)$ has the form $A_n(r)e^{in\theta}$ for some $A_n$, it follows that $u$ is an eigenfunction of $\Delta$ only if each summand $a_n(r)e^{in\theta}$ is, so it suffices to consider just products of the form
\[
u(r,\theta)=f(r)e^{in\theta}.
\]
Expressing $-\Delta+\lambda$ in polar coordinates yields
\[
(-\Delta+\lambda)\bigl(f(r)e^{in\theta}\bigr)=\frac{1-r^2}{4}\left(f^{\prime\prime}(r)+\frac{1}{r}f^\prime(r)+\left(\frac{4\lambda}{1-r^2}-\frac{n^2}{r^2}\right)f(r)\right)e^{in\theta}.
\]
Therefore $u(r,\theta)=f(r)e^{in\theta}$ is an eigenfunction of  $\Delta$ only if $f$ satisfies 
\[
r^2(1-r^2)f^{\prime\prime}+r(1-r^2)f^\prime+\bigl(4\lambda r^2-n^2(1-r^2)\bigr)f=0.
\]
Assume for definiteness that $n\geq0$ and write $f(r)=r^ng(r^2)$, so that $g$ satisfies the hypergeometric equation
\[
r(1-r)g^{\prime\prime}+(c-(a+b+1)r)g^\prime-ab\,g=0,
\]
where 
\[
a=\Bigl(n+\sqrt{n^2+4\lambda}\Bigr)/2,\quad b=\Bigl(n-\sqrt{n^2+4\lambda}\Bigr)/2,\mbox{ and }c=n+1.
\]
This has a unique non-singular solution (up to scalar multiplication), the hypergeometric function
\[
g(r)=_2\!F_1(a,b;n+1;r)\quad\mbox{ where }\quad g(1)=\frac{n!}{\Gamma(1+a)\Gamma(1+b)},
\]
which is zero at $r=1$ if and only if $b=-m$ for some integer $m\geq 1$.  This implies $\lambda=m(m+n)$ is a positive integer, proving part (1) of the theorem.  If $n>0$ there are two corresponding eigenfunctions $u(r,\theta)=r^ng(r^2)e^{\pm in\theta}$, making a total of $\tau(\lambda)$ eigenfunctions for each positive integer eigenvalue $\lambda$.  That these are linearly independent (part (3)), and that the eigenfunctions are polynomials (part (2)) can be verified using an explicit formula for the eigenfunctions, as follows. 

Using the formulation $\Delta=-(1-z\overline{z})\frac{\partial^2}{\partial\overline{z}\partial z}$, where $z=x+iy$, one can check directly that the Rodrigues-type formula
\begin{equation}\label{Rodrigues}
 u^{(p,q)}(z) = \frac{(-1)^p}{q(p+q-1)!}(1-z\bar{z})\frac{\partial^{p+q}}{\partial\bar{z}^p\partial z^q} (1-z\bar{z})^{p+q-1}
\end{equation}
represents eigenfunctions corresponding to $\lambda=pq$, i.e., $\Delta u^{(p,q)}=pq\,u^{(p,q)}$.  
\end{proof}
\subsection{Two corollaries}

\begin{corollary}
The spectral function is precisely the square of the Riemann zeta function
\[
\sum_n\frac{1}{(\lambda_n)^s} = \sum_n \frac{\tau(n)}{n^s} = (\zeta(s))^2.
\]
\end{corollary}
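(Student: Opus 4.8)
The plan is to read off the first equality directly from the preceding theorem and then recognize the second as the classical Dirichlet-series identity for the divisor function. First I would fix the interpretation of the spectral function as the sum over the spectrum counted with multiplicity. By part (1) of the theorem the distinct eigenvalues are exactly the positive integers, and by part (3) the integer $n$ occurs with multiplicity $\tau(n)$. Hence, enumerating the eigenvalues $\lambda_n$ with multiplicity, each integer $n$ contributes precisely $\tau(n)$ copies of $n^{-s}$, so that $\sum_n \lambda_n^{-s} = \sum_{n\ge 1} \tau(n)\,n^{-s}$. This step is pure bookkeeping once the spectral data from the theorem are in hand.

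For the second equality I would exploit that $\tau$ is the Dirichlet convolution of the constant arithmetic function $1$ with itself, i.e.\ $\tau(n) = \sum_{d\mid n} 1$. Concretely, for $\mathrm{Re}\,s > 1$ I would write $\zeta(s)^2$ as the product of two absolutely convergent series and expand the Cauchy product over pairs $(j,k)$ of positive integers, regrouping by the value of the product $n = jk$:
\[
\zeta(s)^2 = \Bigl(\sum_{j\ge 1} j^{-s}\Bigr)\Bigl(\sum_{k\ge 1} k^{-s}\Bigr) = \sum_{j,k\ge 1} (jk)^{-s} = \sum_{n\ge 1} n^{-s}\sum_{jk=n} 1 = \sum_{n\ge 1}\frac{\tau(n)}{n^s}.
\]
The regrouping of the double sum by the level sets $\{jk = n\}$ is the only point requiring justification, and it follows from absolute convergence of $\sum_{j,k}|jk|^{-\mathrm{Re}\,s}$ for $\mathrm{Re}\,s > 1$, which permits arbitrary rearrangement of terms.

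I expect no serious obstacle here: the entire content is the transport of the multiplicity count $\tau(n)$ from the theorem into the definition of the spectral zeta function, followed by the textbook convolution identity. The one point deserving genuine care is the convergence bookkeeping, namely verifying that the spectral series and the double series both converge absolutely in a common right half-plane $\mathrm{Re}\,s > 1$, so that the formal manipulations are legitimate; the resulting identity then extends by analytic continuation to the natural domain of $\zeta(s)^2$.
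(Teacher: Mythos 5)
Your proof is correct and is exactly the argument the paper implicitly relies on: the corollary is stated without proof, as an immediate consequence of parts (1) and (3) of the eigenvalue theorem combined with the classical identity $\sum_{n\ge 1}\tau(n)n^{-s}=\zeta(s)^2$ for $\mathrm{Re}\,s>1$. Your attention to absolute convergence when regrouping the double sum is a welcome extra care, but there is no divergence from the paper's (unstated) route.
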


In a more applied vein, consider a unit radius circular membrane fixed at the boundary (i.e. a drumhead) having constant tensile force per unit length $S$ and radially varying density $\rho(r)=4S/(1-r^2)$. Small transverse displacements of the membrane $w(x,y,t)$ are governed by the equation 
\begin{equation}\label{harmonic}
w_{tt}+\Delta w=0,
\end{equation}
so squared eigenfrequencies of the membrane correspond to eigenvalues of $\Delta$. 
\begin{corollary}
Unlike the standard vibrating membrane whose eigenfrequencies are proportional to zeros of $J_0$, for each eigenfrequency $\omega_n=\sqrt{n}$ of (\ref{harmonic}), all the higher harmonics $m\omega_n$ $(2<m\in\integer_+)$ are also eigenfrequencies. 
\end{corollary}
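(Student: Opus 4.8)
The plan is to read the statement off directly from part~(1) of the eigenvalue theorem, which identifies the spectrum of $\Delta$ with the entire set of positive integers; the corollary then reduces to a purely arithmetic observation. First I would make the correspondence between eigenfrequencies and eigenvalues explicit. Separating variables in (\ref{harmonic}) via $w(x,y,t)=u(x,y)e^{i\omega t}$ turns $w_{tt}+\Delta w=0$ into $(\Delta u-\omega^2 u)e^{i\omega t}=0$, so a nontrivial time-harmonic solution exists exactly when $\omega^2$ is an eigenvalue of $\Delta$. Hence the admissible eigenfrequencies are precisely the numbers $\sqrt{\lambda}$ with $\lambda$ in the spectrum, and since every eigenvalue is strictly positive each such $\omega$ is real.

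Next I would invoke the theorem: the spectrum is exactly $\{1,2,3,\dots\}$, so the set of squared eigenfrequencies is the full set $\integer_+$ and the eigenfrequencies are the numbers $\omega_n=\sqrt{n}$. The key point is that $\integer_+$ is closed under multiplication by perfect squares. Concretely, fixing an eigenfrequency $\omega_n=\sqrt{n}$ and an integer $m$ with $m>2$, the harmonic $m\omega_n$ satisfies $(m\omega_n)^2=m^2 n\in\integer_+$, which is therefore again an eigenvalue of $\Delta$. Consequently $m\omega_n$ is again an eigenfrequency, which is the assertion. The same computation shows the conclusion holds verbatim for $m=2$ as well.

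There is really no analytic obstacle to surmount, since all the substance lies in the theorem already established and the corollary is a formal consequence of it; the only delicate part is expository, namely making the contrast with the ordinary drum precise. For that I would recall that the Dirichlet spectrum of the flat unit disk consists of the squares of the zeros of the Bessel functions $J_n$, and that no nonzero integer multiple of such a zero is again a zero of any $J_n$. Thus the classical spectrum fails to be closed under multiplication by $m^2$, the integer harmonics of a given fundamental are absent, and it is precisely this deficiency that the metric $g$ repairs.
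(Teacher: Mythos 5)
Your argument is correct and is exactly the intended one: the paper states this corollary without proof, as an immediate consequence of the remark that squared eigenfrequencies correspond to eigenvalues of $\Delta$ together with part (1) of the eigenvalue theorem, so that $(m\omega_n)^2=m^2n\in\integer_+$ is again an eigenvalue. Your side observation that the conclusion holds for $m=2$ (indeed for every $m\ge 1$) is also right; the condition $2<m$ in the statement reads like a slip for $2\le m$, and your remark on Bessel zeros is only expository background for the contrast, not something the corollary requires.
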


\subsection{Acoustic imaging and combinatorics}

Supplemented by the sequence of monomials $u^{(p,0)}(z):=z^p$ for $p\geq 0$, the eigenfunctions of $\Delta$ are the special functions suited to acoustic imaging of layered media. 
A layered medium consists of a stack of $n-1$ horizontal slabs between two semi-infinite half spaces, where each slab and half space has constant acoustic impedance. Thus impedance as a function of depth is a step function having jumps at the $n$ interfaces. To image the layers, an impulsive horizontal plane wave is transmitted at time $t=0$ from a reference plane in the upper half space down toward the stack of horizontal slabs, and the resulting echoes are recorded at the reference plane, producing a function $G(t)$ $(t>0)$ (the boundary Green's function of the medium).  Let $L_1$ denote the time required for acoustic waves to travel from the reference plane to the first interface and back, with $L_2,\ldots,L_n$ denoting two-way travel time within each successive layer. When a wave travels downward toward the $j$th interface, it is partly reflected back, with amplitude factor $R_j$, and partly transmitted, with transmission factor $1-R_j$ $(1\leq j\leq n)$. 
\begin{theorem}
\[
G(t)=\sum_{k\in\{1\}\times\integer_+^{n-1}}\left(\prod_{j=1}^nu^{(k_j,k_{j+1})}(R_j)\right)\delta\bigl(t-\langle L,k\rangle\bigr).
\]
\end{theorem}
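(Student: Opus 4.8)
The plan is to pass to the frequency domain, reduce the multiple-scattering problem to a one-step layer-stripping recursion, and recognise the claimed series as the expansion of that recursion's solution, with the eigenfunctions $u^{(p,q)}$ entering through a single generating-function identity.

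First I would write $\hat G^{(j)}(\omega)$ for the downward reflection response recorded at the top of the $j$th region (the slab whose two-way travel time is $L_j$), so that $G$ is the inverse transform of $\hat G^{(1)}$ while $\hat G^{(n+1)}=0$, nothing returning from the bottom half-space. Summing the geometric series of internal reverberations between interface $j$ and everything beneath it, using the downward reflection $R_j$ and transmission $1-R_j$ together with the upward reflection $-R_j$ and transmission $1+R_j$ dictated by continuity, collapses to the M\"obius recursion
\[
\hat G^{(j)} = w_j\,\frac{R_j+\hat G^{(j+1)}}{1+R_j\hat G^{(j+1)}},\qquad w_j:=e^{-i\omega L_j},
\]
the prefactor $w_j$ recording the round trip through region $j$. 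Treating the $w_j$ as independent formal variables, a monomial $\prod_j w_j^{k_j}$ inverts to $\delta\bigl(t-\langle L,k\rangle\bigr)$, so the theorem becomes the algebraic assertion that the coefficient of $w_1\prod_{j\ge 2}w_j^{k_j}$ in $\hat G^{(1)}$ is $\prod_{j=1}^n u^{(k_j,k_{j+1})}(R_j)$, with $k_1=1$ and $k_{n+1}:=0$.

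The engine of the proof is the identity
\[
\left(\frac{R+\zeta}{1+R\zeta}\right)^{p} = \sum_{q\ge 0} u^{(p,q)}(R)\,\zeta^{q}\qquad(p\ge 0),
\]
understood on the diagonal $z=\bar z=R$ and incorporating the monomials $u^{(p,0)}(R)=R^p$. Granting this, I would prove by downward induction on $j$ that $\bigl(\hat G^{(j)}\bigr)^{p}$ equals the same product series with its leading index promoted from $1$ to $p$: indeed $\bigl(\hat G^{(j)}\bigr)^p=w_j^{\,p}\bigl((R_j+\hat G^{(j+1)})/(1+R_j\hat G^{(j+1)})\bigr)^p$, and the identity rewrites the M\"obius factor as $\sum_q u^{(p,q)}(R_j)\bigl(\hat G^{(j+1)}\bigr)^q$; feeding in the inductive description of $\bigl(\hat G^{(j+1)}\bigr)^q$ then attaches the factor $u^{(p,q)}(R_j)$ cleanly to the deeper product. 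The base case $\hat G^{(n)}=w_nR_n$ is the monomial statement, and taking $j=1$, $p=1$ yields exactly the claimed coefficients, hence $G(t)$ after inversion.

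The main obstacle is the generating-function identity itself, since $u^{(p,q)}$ is presented through the Rodrigues formula (\ref{Rodrigues}) rather than as a Taylor coefficient. The case $p=1$ is a direct expansion giving $\sum_q u^{(1,q)}(R)\zeta^q=(R+\zeta)/(1+R\zeta)$, and the general case is equivalent to the convolution $u^{(p,q)}(R)=\sum_{a+b=q}u^{(1,a)}(R)\,u^{(p-1,b)}(R)$, i.e.\ to the addition property of these disk polynomials. I expect the cleanest route is to show that $F_p(\zeta):=\sum_q u^{(p,q)}(R)\zeta^q$ solves the first-order equation $(R+\zeta)(1+R\zeta)F_p'=p(1-R^2)F_p$ with $F_p(0)=R^p$, equivalently the three-term recurrence $R(q+1)u^{(p,q+1)}=\bigl(p(1-R^2)-(1+R^2)q\bigr)u^{(p,q)}-R(q-1)u^{(p,q-1)}$, a contiguous relation that can be extracted from (\ref{Rodrigues}) via a Cauchy-integral representation of the mixed derivatives $\partial_{\bar z}^{\,p}\partial_z^{\,q}(1-z\bar z)^{p+q-1}$ on the diagonal, or recognised from the classical generating function for the Zernike/disk polynomials. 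Uniqueness for the linear equation then forces $F_p=\bigl((R+\zeta)/(1+R\zeta)\bigr)^p$, completing the reduction.
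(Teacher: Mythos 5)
Your proposal is correct in outline but takes a genuinely different route from the paper. The paper's own proof contains no scattering analysis at all: it expands the Rodrigues formula (\ref{Rodrigues}) into an explicit polynomial in $r$, observes that for $\theta=0,\pi$ these polynomials coincide with the functions $f^{(p,q)}$ of \cite[Thm.~2.4, 4.3]{Gi:SIAP2014}, and imports the scattering expansion wholesale from that reference. You instead derive the expansion from scratch: the frequency-domain layer-stripping recursion $\hat G^{(j)} = w_j\,(R_j+\hat G^{(j+1)})/(1+R_j\hat G^{(j+1)})$ (which does follow from summing the reverberation series with the weights $R_j,-R_j,1-R_j,1+R_j$ of Figure~\ref{fig}), the generating identity $\bigl((R+\zeta)/(1+R\zeta)\bigr)^p=\sum_{q\ge0}u^{(p,q)}(R)\,\zeta^q$, and downward induction on $j$. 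The identity is true --- it checks against (\ref{Rodrigues}) for small $p,q$ (e.g.\ $u^{(1,q)}(R)=(1-R^2)(-R)^{q-1}$, $u^{(2,1)}(R)=2R(1-R^2)$, $u^{(2,2)}(R)=(1-R^2)(1-3R^2)$) and is exactly compatible with the conventions $u^{(p,0)}(R)=R^p$ and $u^{(0,q)}\equiv0$ for $q\ge1$, which it in fact explains --- and your induction then delivers the stated coefficients with $k_1=1$, $k_{n+1}=0$. What your approach buys is a self-contained argument that makes transparent \emph{why} the eigenfunctions appear (as Taylor coefficients of powers of the single-interface M\"obius factor); what the paper's approach buys is brevity and the Dyck-path combinatorial interpretation developed in the reference. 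The one step you must still write out in full is the generating-function identity itself: your characterization via the first-order equation $(R+\zeta)(1+R\zeta)F_p'=p(1-R^2)F_p$ with $F_p(0)=R^p$, equivalently the stated three-term contiguous relation for the $u^{(p,q)}$, is the right mechanism and does determine $F_p$ uniquely as a formal power series, but the relation still has to be extracted from (\ref{Rodrigues}) (or from the paper's explicit polar-form expansion of $u^{(p,q)}$, which reduces it to a finite hypergeometric computation).
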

Here $L=(L_1,\ldots,L_n)$, $u^{(0,q)}\equiv0$ if $q\geq1$ and $k_j=0$ if $j>n$.  
\begin{proof}
Expanding the binomial $(1-z\bar{z})^{p+q-1}$ in the formula (\ref{Rodrigues}), and then applying the derivative $\rule{0pt}{13pt}\partial^{p+q}/\partial\bar{z}^p\partial z^q$, yields
\[u^{(p,q)}(z)=
\frac{(-1)^{q+\nu+1}}{q}(1-z\bar{z})z^{m+\nu-q+1}\bar{z}^{m+\nu-p+1}\sum_{j=0}^{\nu}(-1)^j\frac{(j+\nu+m+1)!}{j!(j+m)!(\nu-j)!}(z\bar{z})^j,
\]
where $m=|p-q|$ and $\nu=\min\{p,q\}-1$.  Switching to polar form $z=r e^{i\theta}$, it follows that 
\[
u^{(p,q)}\bigl(r e^{i\theta}\bigr)=e^{i(p-q)\theta}\frac{(-1)^{q+\nu+1}}{q}(1-r ^2)r ^m\sum_{j=0}^{\nu}(-1)^j\frac{(j+\nu+m+1)!}{j!(j+m)!(\nu-j)!}r ^{2j}.
\]
For $\theta=0,\pi$, the latter coincide with the functions $f^{(p,q)}$ occurring in \cite[Thm.~2.4, 4.3]{Gi:SIAP2014}.  
\end{proof}
Each term $\left(\prod_{j=1}^nu^{(k_j,k_{j+1})}(R_j)\right)\delta\bigl(t-\langle L,k\rangle\bigr)$ corresponds to the set of all scattering sequences that have a common arrival time $t_i=\langle L,k\rangle$, with each individual scattering sequence weighted according to the corresponding succession of reflections and transmissions.  A scattering sequence may be represented by a Dyck path as in Figure~\ref{fig}.  
The tensor products $\prod_{j=1}^nu^{(k_j,k_{j+1})}(R_j)$ thus have a combinatorial interpretation in that they count weighted Dyck paths having $2k_j$ edges at height $j$. See \cite{Gi:SIAP2014}. 
\begin{figure}[h]
\fbox{
\includegraphics[clip,trim=0in 2in 0in -2in, width=3in]{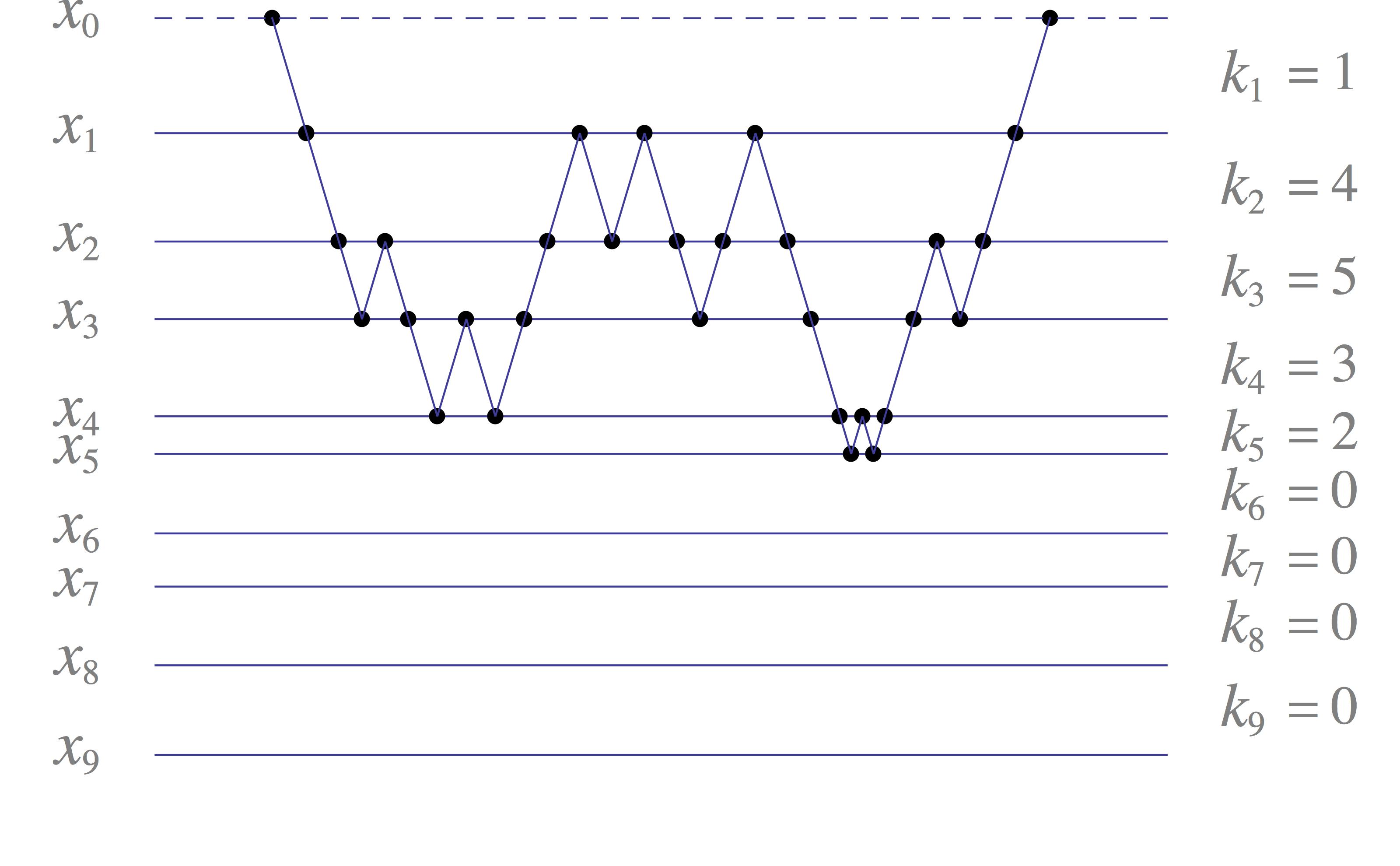}
}
\caption{ The Dyck path for a scattering sequence. Time increases to the right. Each node at depth $x_j$ receives a weight according to the structure of the path at the node: weights $R_j,-R_j,1-R_j,1+R_j$ correspond respectively to down-up reflection, up-down reflection, downward transmission, upward transmission. The scattering sequence returns to the reference depth $x_0$ at time $t_i=\langle L,k\rangle$, where $k=(k_1,\ldots,k_n)$. Its amplitude is the product of the weights.}\label{fig}
\end{figure}

\vspace{20pt}

\noindent Larry M. Bates \newline
Department of Mathematics \newline
University of Calgary \newline
Calgary, Alberta \newline
Canada T2N 1N4 \newline
bates@ucalgary.ca\newline

\vspace{20pt}

\noindent Peter C. Gibson \newline
Department of Mathematics \newline
York University\newline
Toronto, Ontario\newline
Canada M3J 1P3\newline
pcgibson@yorku.ca 

\end{document}